\newcommand{\Ebb}{\mathbb{E}}
\newcommand{\Cbb}{\mathbb{C}}
\newcommand{\Rbb}{\mathbb{R}}
\newcommand{\Tbb}{\mathbb{T}}
\newcommand{\Nbb}{\mathbb{N}}
\newcommand{\Xbb}{{\mathbb{X}}}
\newcommand{\onebb}{\mathbf{1}}
\newcommand{\Ncal}{\mathcal{N}}
\newcommand{\Bcal}{\mathcal{B}}
\newcommand{\Dcal}{\mathcal{D}}
\newcommand{\Ical}{\mathcal{I}}
\newcommand{\pderiv}[2]{\frac{\partial #1}{\partial #2}}
\newcommand{\diff}{\,\textup{d}}
\DeclareMathOperator{\gp}{\mathcal{GP}}
\DeclareMathOperator{\loc}{loc}
\newtheorem{proposition}{Proposition}
\newtheorem{lemma}[proposition]{Lemma}
\newtheorem{assumption}[proposition]{Assumption}
\crefname{assumption}{Assumption}{Assumptions}
\crefname{appendix}{Supplement}{Supplements}
\crefname{supp}{Supplement}{Supplements}
\tikzset{>=stealth'}
\begin{document}

%
\runningtitle{Probabilistic Numerical Method of Lines}

%

\twocolumn[

  \aistatstitle{Probabilistic Numerical Method of Lines \\
    for Time-Dependent Partial Differential Equations
  }

  \aistatsauthor{ Nicholas Krämer \And Jonathan Schmidt \And Philipp Hennig }

  \aistatsaddress{ University of Tübingen \\Tübingen, Germany \And  University of Tübingen \\Tübingen, Germany \And University of Tübingen and \\Max Planck Institute for Intelligent Systems \\Tübingen, Germany } ]

\begin{abstract}
  This work develops a class of probabilistic algorithms for the numerical solution of nonlinear, time-dependent partial differential equations (PDEs).
  Current state-of-the-art PDE solvers treat the space- and time-dimensions separately, serially, and with black-box algorithms, which obscures the interactions between spatial and temporal approximation errors and misguides the quantification of the \emph{overall} error.
  To fix this issue, we introduce a probabilistic version of a technique called \emph{method of lines}.
  The proposed algorithm begins with a Gaussian process interpretation of finite difference methods, which then interacts naturally with filtering-based probabilistic ordinary differential equation (ODE) solvers because they share a common language: Bayesian inference.
  Joint quantification of space- and time-uncertainty becomes possible {without} losing the performance benefits of well-tuned ODE solvers.
  Thereby, we extend the toolbox of probabilistic programs for differential equation simulation to PDEs.
\end{abstract}

\section{INTRODUCTION}

This work develops a class of probabilistic numerical (PN) algorithms for the solution of initial value problems based on {partial} differential equations (PDEs).
PDEs are a widely-used way of modelling physical interdependencies between temporal and spatial variables.
With the recent advent of physics-informed neural networks \citep{raissi2019physics}, neural operators \citep{lu2019deeponet,li2021fourier}, and neural ordinary/partial differential equations \citep{chen2018neural,gelbrecht2021neural}, PDEs have rapidly gained popularity in the machine learning community, too.
Let $F$, $h$, and $g$ be given nonlinear functions. Let the domain $\Omega \subset \Rbb^d$ with boundary $\partial \Omega$ be sufficiently well-behaved so that the PDE (below) has a unique solution.\footnote{One common assumption is that $\Omega$ must be open and bounded, and that $\partial \Omega$ must be differentiable. But requirements vary across differential equations \citep{evans2010partial}.}
$\Dcal$ shall be a differential operator.
The reader may think of the Laplacian, $\Dcal = \sum_{i=1}^d \pderiv{^2}{x_i^2}$, but the algorithm is not restricted to this case.
The goal is to approximate an unknown function $u: \Omega \rightarrow \Rbb^L$ that solves
\begin{align}\label{eq:pde}
  \pderiv{}{t}u(t, x)
   & = F(t, x, u(t, x), \Dcal u(t, x)),
\end{align}
for $t \in [t_0, t_\text{max}]$ and $x \in \Omega$,
subject to initial condition
\begin{align}
  u(t_0, x) & = h(x), \quad x \in \Omega,
\end{align}
and boundary conditions
$  \Bcal u(t, x) = g(x)$, $x \in \partial \Omega$. The differential operator $\Bcal$ is usually the identity (Dirichlet conditions) or the derivative along normal coordinates  (Neumann conditions).
Except for only a few problems, PDEs do not admit closed-form solutions, and numerical approximations become necessary.
This also affects machine learning strategies such as physics-informed neural networks or neural operators, for example, because they rely on fast generation of training data.

\begin{figure*}[t!]
  \centering
  \includegraphics{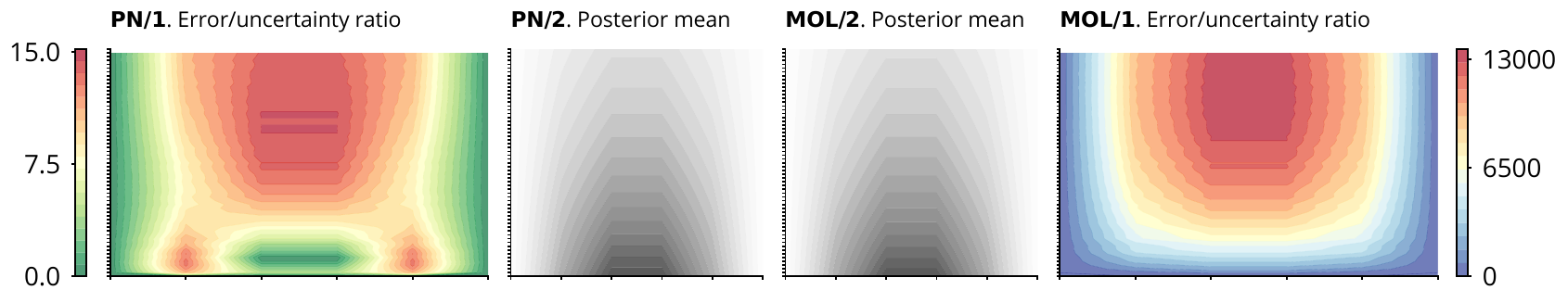}
  \caption{\textit{PNMOL fixes bad calibration of the MOL/ODE filter combination:}
    Posterior means and error/uncertainty ratios of PNMOL (left) and MOL with a conventional PN solver (right) on a fine time grid ($y$-axis) and a coarse space grid ($x$-axis) for the heat equation.
    The means are indistinguishable (*/2).
    PN w/ MOL is poorly calibrated (error/uncertainty ratios $\sim 10^5$; */1), but PNMOL acknowledges all inaccuracies.
  }
  \label{fig:figure1}
\end{figure*}
One common strategy for solving PDEs, called the \emph{method of lines} \citep[MOL;][]{schiesser2012numerical}, first discretises the spatial domain $\Omega$ with a grid $\Xbb := (x_0, ..., x_N)$, and then uses this grid to approximate the differential operator $\Dcal$ with a matrix-vector product
\begin{align}\label{eq:teaser_differentiation_matrix}
  (\Dcal u)(t, \Xbb) \approx D u(t, \Xbb), \quad D \in \Rbb^{(N+1) \times (N+1)},
\end{align}
where we use the notation $u(t, \Xbb) = (u(t, x_n))_{n=0}^N$.
Replacing the differential operator $\Dcal$ with the matrix $D$ turns the PDE into a system of ordinary differential equations (ODEs; more information about $D$ follows).
Standard ODE solvers can then numerically solve the resulting initial value problem.

This approach has one central problem.
Discretising the spatial domain, and only then applying an ODE solver, turns the PDE solver into a \emph{pipeline of two numerical algorithms} instead of a single algorithm.
This is bad because as a result of this serialisation, the error estimates returned by the ODE solver are unreliable.
The solver lacks crucial information about whether the spatial grid consists of, say, $N=4$ or $N=10^7$ points.
It is intuitive that a coarse spatial grid puts a lower bound on the overall precision, even if the ODE solver uses small time steps. But since this is not ``known'' to the ODE solver, not even to a probabilistic one, it may waste computational resources by needlessly decreasing its step-size and may deliver (severely) overconfident uncertainty estimates (for example, \cref{fig:figure1}).
This phenomenon will be confirmed by the experiments in \cref{sec:experiments}.
Such overconfident uncertainty estimates are essentially useless to a practitioner, which complicates the usage of method of lines approaches within probabilistic simulation of differential equations; for example, in the context of latent force inference or inverse problems.

The present work provides a solution to this problem by revealing a way of tracking spatiotemporal correlations in the PDE solutions while preserving the computational advantages of traditional MOL through a probabilistic numerical solver.
The main idea of the present paper is the following.
The error that is introduced by approximating the differential operator $\Dcal$ with a matrix $D$ can be quantified probabilistically, and its time-evolution acts as a latent force on the resulting ODE.
The combined posterior over the PDE solution and the latent force can be computed efficiently with a PN ODE solver.
More specifically, we contribute:

\paragraph{Probabilistic discretisation}
\cref{sec:probabilistic_discretisation,sec:localisation} present a probabilistic technique for constructing finite-dimensional approximations of $\Dcal$ that quantify the discretisation error.
It translates an unsymmetric collocation method \citep{kansa1990multiquadrics} to the language of Gaussian processes and admits efficient, sparse approximations akin to radial-basis-function-generated finite-difference formulas  \citep{tolstykh2003using}.

\paragraph{Calibrated PDE filter}
Building on the probabilistic discretisation,
\cref{sec:pn_mol} explains how the discretisation and the ODE solver need not be treated as two separate entities.
By applying the idea behind the method of lines to a \emph{global} Gaussian process prior, it becomes evident how the discretisation uncertainty naturally evolves over time.
With a filtering-based probabilistic ODE solver (``ODE filter''), posteriors over the latent error process and the PDE solution can be inferred in {linear time}, and without ignoring spatial uncertainties like traditional MOL algorithms do.

\cref{sec:hyper_parameters} explains hyperparameter choices and calibration.
\cref{sec:related_work} discusses connections to existing work.
\cref{sec:experiments} demonstrates the advantages of the approach over conventional MOL.
Altogether, PNMOL enriches the toolbox of probabilistic simulations of differential equations by a calibrated and efficient PDE solver.

\section{PN DISCRETISATION}
\label{sec:probabilistic_discretisation}

This section derives how to approximate a differential operator $\Dcal$ by a matrix $D$.
Let $u_x(z) \sim \gp(0,\gamma^2  k_x)$ be a Gaussian process on $\Omega$ with covariance kernel $k_x(z, z')$ and output scale $\gamma > 0$.
The presentation in \cref{sec:pn_mol} will assume a Gaussian process prior $u(t,x) \sim \gp(0, \gamma^2  k_t\otimes k_x)$, thus the notation ``$u_x$'' motivates $u_x$ as ``the $x$-part of $u(t,x)$''.\footnote{Here, $u_x$ does not refer to the partial derivative $ \pderiv{}{x}u$, which it sometimes does in the PDE literature.}
The boundary operator $\Bcal$ is ignored in the remainder because it can be treated in the same way as $\Dcal$.
Complete formulas including $\Bcal$, are in \cref{sec:boundary-conditions}.

The objective is to approximate the PDE dynamics in a way that circumvents the differential operator $\Dcal$,
\begin{align}
  f(t, \Xbb, u_x(\Xbb)) \approx F(t, \Xbb, u_x(\Xbb), (\Dcal u_x)(\Xbb)),
\end{align}
that is, $\Dcal u_x$ disappears from \cref{eq:pde} because $f$ replaces $F$.
For linear differential operators $\Dcal$, this reduces to replacing $(\Dcal u_x)(\Xbb)$ with a matrix-vector product $D u_x(\Xbb)$, $D \in \Rbb^{(N+1)\times (N+1)}$ (recall \cref{eq:teaser_differentiation_matrix}), for example, by using the method proposed below.
$\Dcal$ must not be present in the discretised model, because otherwise, the PDE does not translate into a system of ODEs, and we cannot proceed with the method of lines.
But with an approximate derivative based on only matrix-vector arithmetic, the computational efficiency of ODE solvers can be exploited to infer an approximate PDE solution.

Since $u$ is a Gaussian process, applying the linear operator $\Dcal$ yields another Gaussian process $\Dcal u_x \sim \gp(0, \Dcal^2 k_x)$,
where we abbreviate $\Dcal^2 k_x(z, z')=\Dcal \Dcal^* k_x(z, z')$ and $\Dcal^*$ is the adjoint of $\Dcal$ (in the present context, this means applying $\Dcal$ to $z'$ instead of $z$).
Conditioning $u_x$ on realisations of $u_x(\Xbb)$, and then applying $\Dcal$, yields another Gaussian process with moments
\begin{subequations}\label{eq:conditioned_gp_moments_full}
  \label{eq:conditioned_gp_moments}
  \begin{align}
    \hat{m}(z)     & = W_\Xbb(z) u_x(\Xbb),                                              \\
    \hat{k}(z, z') & = (\Dcal^2 k_x)(z, z') - W_\Xbb(z) k_x(\Xbb, \Xbb) W_\Xbb(z')^\top, \\
    W_\Xbb(z)      & := (\Dcal k_x)(z, \Xbb) k_x(\Xbb,\Xbb)^{-1}.
  \end{align}
\end{subequations}
Let $\xi_x \sim \gp (0, \gamma^2 \hat{k})$.
$\xi_x$ is independent of $u_x$, since $\hat{k}$ only depends on $\Dcal$, $k_x$, and $\Xbb$, not on $u_x(\Xbb)$.
Then,
\begin{equation}
  W_\Xbb(z) u_x(\Xbb) + \xi_x(z) \sim \gp(0, \Dcal \Dcal^* k_x)
\end{equation}
follows, which implies
\begin{align}\label{eq:approximation_of_d}
  p(\Dcal u_x(\cdot)) = p(W_\Xbb(\cdot) u_x(\Xbb) + \xi(\cdot)).
\end{align}
In particular, when evaluated on the grid, we obtain the matrix-vector formulation of the differential operator
\begin{equation}\label{eq:sum_independent_gp}
  (\Dcal u_x)(\Xbb) = W_\Xbb(\Xbb) u_x(\Xbb) + \xi_x(\Xbb).
\end{equation}
\cref{eq:sum_independent_gp} yields $\Dcal u_x$ solely as a transformation of values of $u_x$ with known, additive Gaussian noise $\xi_x$.
Altogether, the above derivation identifies the \emph{differentiation matrix} $D$ and the \emph{error covariance} $E$
\begin{align}
  D := W_\Xbb(\Xbb), \quad E := \hat k(\Xbb, \Xbb). \label{eq:error_matrix_E}
\end{align}
If we know $u_x(\Xbb)$, we have an approximation to $(\Dcal u_x)(\Xbb)$ that is wrong with covariance $ \gamma^2 E$.
The matrix $E$ makes the approach probabilistic (and new).

There are two possible interpretations for $D$ (and $E$).
$D$ appears in a method for solving PDEs $\Dcal u_x = f$ with radial basis functions, called \emph{unsymmetric collocation}, or \emph{Kansa's method} \citep{kansa1990multiquadrics,hon2001unsymmetric,schaback2007convergence}.
A related method, called symmetric collocation \citep{fasshauer1997solving,fasshauer1999solving},
has been built on and translated into a Bayesian probabilistic numerical method by \citet{cockayne2017probabilistic}.
The following derivation shows that a translation is possible for the probabilistic discretisation, respectively unsymmetric collocation, as well:
\cref{eq:approximation_of_d} explains
\begin{subequations}
  \begin{align}
    (\Dcal u_x)(\Xbb) \mid u_x(\Xbb), \xi_x
     & \sim \delta\left[D u_x(\Xbb) + \xi_x \right] \\
     & = \Ncal(D u_x(\Xbb), \gamma^2 E).
  \end{align}
\end{subequations}
$\delta$ is the Dirac delta.
In other words, $D$ and $E$ (respectively $D$ and $\xi$) estimate $\Dcal u_x$ from values of $u_x$ at a set of grid-points -- just like finite difference formulas do.
By construction, it is a Bayesian probabilistic numerical algorithm in the technical sense of \citet{cockayne2019bayesian}.
\cref{sec:bpnm} contains a formal proof of this statement.

The limitations of the above probabilistic discretisation are twofold.
First, computation of the differentiation matrix $D$ involves inverting the kernel Gram matrix $k_x(\Xbb, \Xbb)$.
For large point sets, this matrix is notoriously ill-conditioned \citep{schaback1995error}.
Second, computation -- and application -- of $D$ is expensive because $D$ is a dense matrix with as many rows and columns as there are points in $\Xbb$.
Since loosely speaking, a derivative is a ``local'' phenomenon (other than e.g.~computing integrals), intuition suggests that $\Dcal u_x$ can be approximated more cheaply by exploiting this locality.
This thought leads to localised differentiation matrices and probabilistic numerical finite differences.

\section{PN FINITE DIFFERENCES}
\label{sec:localisation}

The central idea of an efficient approximation of the probabilistic discretisation is to approximate the derivative of $u_x$ at each point $x_n$ in $\Xbb$ individually.
This leads to a row-by-row assembly of $D$, which can be sparsified naturally and without losing the ability to quantify numerical discretisation/differentiation uncertainty.

\begin{figure*}[t!]
  \includegraphics{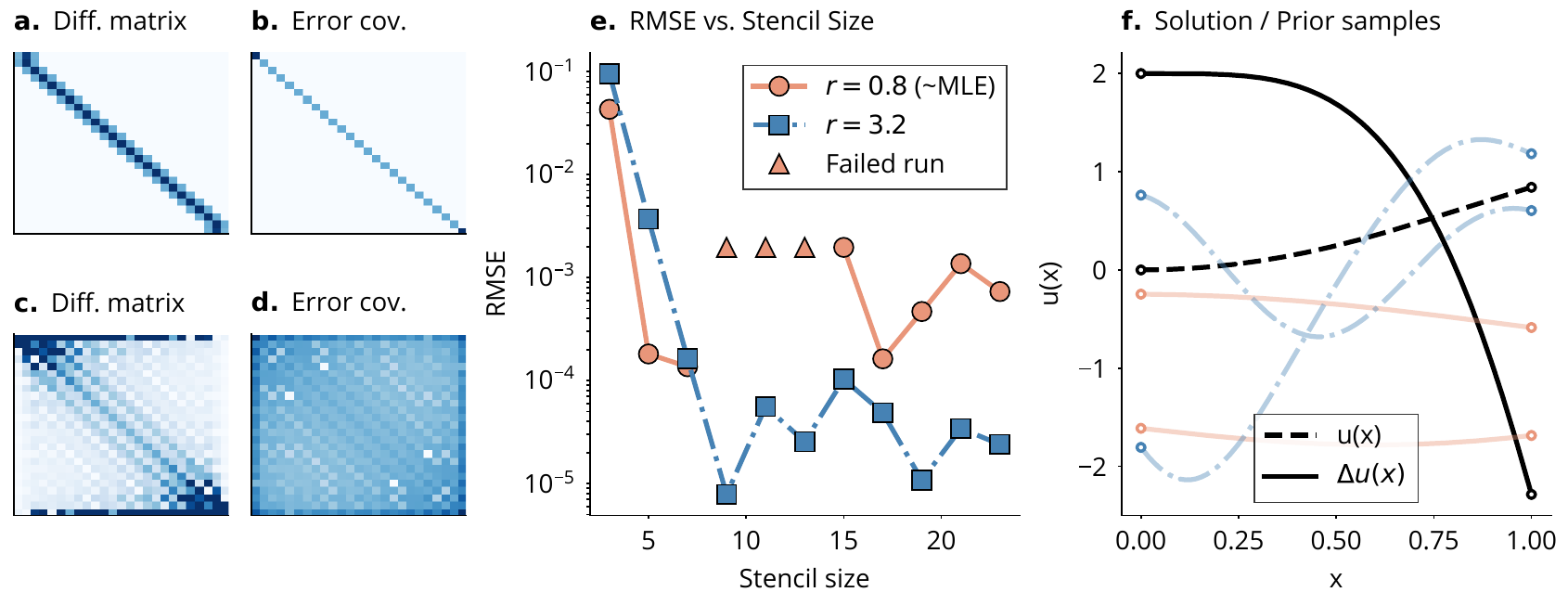}
  \caption{\textit{
      Discretise the Laplacian with a local and global approximation:}
    The target  is the Laplacian $\Dcal = \Delta$ of $u(x) = \sin(\|x\|^2)$ (f).
    \textit{Left:}
    Sparsity pattern of the differentiation matrix and error covariance matrix for the localised approximation (a, b) and the global approximation (c, d) on a mesh with $N=25$ points. The approximation is least certain at the boundaries.
    \textit{Centre:}
    The root-mean-square error between $\Delta u$ and its approximation decreases with an increased stencil size but the approximation breaks down for larger stencils (e), likely due to ill-conditioned kernel Gram matrices.
    A maximum likelihood estimate of the input scale $r \in \Rbb$ of the square exponential kernel $k(x,y) = e^{-r^2 \|x-y\|^2}$ based on data $u_x(x)$ alone does not necessarily lead to well-conditioned system matrices, nor does it generally imply a low RMSE (e).
    \textit{Right:}
    Samples from the prior GP $u_x$ for both length scales are shown next to the solution and the target function (f; the colours match the colours in the RMSE plot).
    \emph{Increasing stencil sizes improves the accuracy until stability concerns arise.}
  }
  \label{fig:stencil_size_comparison}
\end{figure*}
Write the matrix $D$ as a vertical stack of $N+1$ row-vectors, $D = (d_0, ..., d_N)^\top$,
and denote by $e_n$ the $n$th diagonal element of $E$, i.e. the variance of the approximation of $\Dcal u_x$ at the $n$th grid-point in $\Xbb$.
Thus,
\begin{align}\label{eq:scalarified_posterior}
  \Dcal u_x(x_n) \mid u_x(\Xbb), \xi_x \sim \delta[d_n^\top u_x(\Xbb) + \xi_{x, n}]
\end{align}
with the scalar random variable $\xi_{x, n} \sim \Ncal(0, \gamma^2 e_n)$.
If, in \cref{eq:scalarified_posterior}, we replace $u_x(\Xbb)$ by the values of $u_x$ at only a local neighbourhood of $x_n$ (a ``stencil''),
\begin{align}
  x_{\loc(n)}=\{x_{n-k},..., x_n, ..., x_{n+k}\},
\end{align}
the general form of \cref{eq:scalarified_posterior} is preserved, but $d_n$ consists of only $2k+1$ elements, with $k \ll N$,
\begin{align}\label{eq:differentiation_matrix_D_localised}
  d_n = (\Dcal k_x)(x_n, {x}_{\loc(n)}) k_x(x_{\loc(n)}, x_{\loc(n)})^{-1}
\end{align}
instead of $N+1$ elements.
The error $e_n$ becomes
\begin{align}\label{eq:error_matrix_E_localised}
  e_n & := (\Dcal^2 k_x)(x_n, x_n) - d_n k_x (x_{\loc(n)}, x_{\loc(n)})d_n^\top.
\end{align}
If the coefficients in the new $d_n$ are all embedded into $D$ according to the indices of $x_{\loc(n)}$ in $\Xbb$, $D$ becomes sparse and $E$ becomes diagonal.
More precisely, $D_{ij} \neq 0$ if and only if $j \in \loc(i)$.
On a one-dimensional domain $\Omega$, and with $k=1$, $D$ is a banded matrix with bandwidth 3.
Furthermore, due to the point-by-point construction, choosing $k= N/2$ implies that the original $D$ from \cref{sec:probabilistic_discretisation} is recovered; however, $E$ remains diagonal.
The sparse approximation resolves many of the performance issues that persist with the global approximation (\cref{fig:stencil_size_comparison}).

The sparsified differentiation matrix is known as the radial-basis-function-generated finite difference matrix \citep{driscoll2002interpolation,shu2003local,tolstykh2003using,fornberg2015primer}.
The correspondence to finite-difference formulas stems from the fact that if $k_x$ were a polynomial kernel and the mesh $\Xbb$ were equidistant and one-dimensional, the coefficients in $D$ would equal the standard finite difference coefficients \citep{fornberg1988generation}.
The advantage of the more general, kernel-based finite difference approximation over polynomial coefficients is a more robust approximation, especially for non-uniform grid points and in higher dimensions \citep{tolstykh2003using,fornberg2015primer}.
The Bayesian point of view does not only add uncertainty quantification in the form of $E$ but also reveals that a practitioner may choose suitable kernels $k_x$ to include prior information into the PDE simulation.

\section{PN METHOD OF LINES}
\label{sec:pn_mol}
The previous two sections have been concerned with approximating the derivative of a function, purely from observations of this function on a grid.
Next, we use these strategies to solve time-dependent PDEs.
To this end, we combine the probabilistic discretisation with an ODE filter. As opposed to non-probabilistic MOL, this combination quantifies the leak of information between the space discretisation and the ODE solution.
Simply put, what is commonly treated as a pipeline of disjoint solvers, becomes more of a single algorithm.

\subsection{Spatiotemporal Prior Process}
For any function $\varphi=\varphi(t)$, let $\vec{\varphi}$ be the stack of $\varphi$ and its first $\nu$ derivatives, $\vec{\varphi}(t) := (\varphi(t), \dot \varphi(t), ..., \varphi^{(\nu)}(t))$.
This abbreviation is convenient because some stochastic processes (like the integrated Wiener or Mat\'ern process) do not have the Markov property, but the stack of state and derivatives does.
The following assumption is integral for the probabilistic method of lines.

\begin{assumption}\label{assumption:separable_prior_covariance}
  Assume a Gaussian process prior with separable covariance structure,
  \begin{align}
    u=u(t,x) \sim \gp(0, \gamma^2 k_t \otimes k_x)
  \end{align}
  for some output-scale $\gamma > 0$. $k_t \otimes k_x$ is the product kernel $(k_t \otimes k_x)(t, t', x, x') = k_t(t, t')k_x(x, x')$.
\end{assumption}
Compared to the traditional method of lines, where the temporal and spatial dimensions are treated independently and with black-box methods, \cref{assumption:separable_prior_covariance} is mild: even though the covariance is separable, the algorithm still starts with a single, global Gaussian process.
\cref{assumption:separable_prior_covariance} allows choosing temporal kernels that eventually lead to a fast algorithm:
\begin{assumption}\label{assumption:markov_sde}
  Assume $k_t$ is a covariance kernel that allows conversion to a linear, time-invariant stochastic differential equation (SDE) in the following sense:
  For any $\upsilon \sim \gp(0, \gamma^2 k_t)$, $\vec{\upsilon}$ solves the SDE
  \begin{align}
    \diff \vec{\upsilon}(t) = A \vec{\upsilon}(t) \diff t + B \diff w(t)
  \end{align}
  subject to Gaussian initial conditions
  \begin{align}
    \vec{\upsilon}(t_0) \sim \Ncal(m_0, \gamma^2 C_0),
  \end{align}
  for $A$, $B$, $m_0$, and $C_0$ that derive from $k_t$, and for a one-dimensional Wiener process $w$ with diffusion $\gamma^2$.
\end{assumption}
\cref{assumption:markov_sde} is satisfied, for instance, for the integrated Wiener process or the Mat\'ern process; many more examples are given in Chapter 12 of the book by \citet{sarkka2019applied}.
\cref{assumption:separable_prior_covariance,assumption:markov_sde} unlock the machinery of probabilistic ODE solvers.

Next, we add spatial correlations into the prior SDE model.
The following \cref{lem:sde_representation_discretised} will simplify the derivations further below \citep{solin2016stochastic}.

\begin{lemma}\label{lem:sde_representation_discretised}
  Let $k_t$ be a covariance function that satisfies \cref{assumption:markov_sde}.
  Let $M \in \Rbb^{q\times q}$ be a matrix.
  For a process $\varphi \sim \gp(0, \gamma^2 k_t \otimes M)$, $\vec{\varphi}$ solves
  \begin{align}
    \diff \vec{\varphi}(t) = (A \otimes I) \vec{\varphi}(t) \diff t + (B \otimes I) \diff \hat w(t),
  \end{align}
  subject to Gaussian initial conditions
  \begin{align}
    \vec{\varphi}(t_0) \sim \Ncal(m_0 \otimes \onebb, \gamma^2 C_0 \otimes M),
  \end{align}
  where $\hat w$ is a $q$-dimensional Wiener process with constant diffusion $\gamma^2 M$, and $I \in \Rbb^{q\times q}$ is the identity.
  $m_0 \otimes \onebb$ is a vertical stack of $d$ copies of $m_0$.
\end{lemma}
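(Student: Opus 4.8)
The plan is to reduce the claim to the scalar case of \cref{assumption:markov_sde} by factoring the spatial covariance and pushing it through a fixed linear map. Since $M$ is a covariance factor it is symmetric positive semidefinite, so write $M = LL^\top$ with $L \in \Rbb^{q\times q}$ (e.g.\ a Cholesky factor). Let $\zeta_1, \dots, \zeta_q$ be independent copies of a process with law $\gp(0, \gamma^2 k_t)$; by \cref{assumption:markov_sde} each stack $\vec{\zeta}_i$ solves $\diff\vec{\zeta}_i = A\vec{\zeta}_i\diff t + B\diff w_i$ with independent scalar Wiener processes $w_i$ of diffusion $\gamma^2$ and independent initial values $\vec{\zeta}_i(t_0) \sim \Ncal(m_0, \gamma^2 C_0)$. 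Set $\varphi(t) := L(\zeta_1(t), \dots, \zeta_q(t))^\top$. Then $\mathrm{Cov}(\varphi(t), \varphi(t')) = L(\gamma^2 k_t(t,t') I)L^\top = \gamma^2 k_t(t,t')M$, so $\varphi \sim \gp(0, \gamma^2 k_t \otimes M)$; since the law of a Gaussian process is determined by its mean and covariance, it is enough to establish the stated SDE representation for this particular $\varphi$.

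As $L$ is constant, differentiation in $t$ commutes with it, so $\varphi^{(j)} = L\zeta^{(j)}$ with $\zeta^{(j)} := (\zeta_1^{(j)}, \dots, \zeta_q^{(j)})$. In the ordering that places the derivative index outermost and the spatial index innermost --- the ordering to which the Kronecker factors in the statement refer --- stacking the $q$ independent scalar SDEs coordinatewise yields $\diff\eta = (A\otimes I)\eta\diff t + (B\otimes I)\diff w$ for $\eta := (\zeta^{(0)}, \dots, \zeta^{(\nu)})$ and a $q$-dimensional Wiener process $w := (w_1, \dots, w_q)$ of diffusion $\gamma^2 I$, with $\eta(t_0) \sim \Ncal(m_0\otimes\onebb, \gamma^2 C_0\otimes I)$. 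Now $\vec{\varphi} = (I_{\nu+1}\otimes L)\eta$; applying this linear map and using the mixed-product identity $(P\otimes Q)(R\otimes S) = PR\otimes QS$ --- in particular $(I_{\nu+1}\otimes L)(A\otimes I) = A\otimes L = (A\otimes I)(I_{\nu+1}\otimes L)$ and the analogue for $B$ --- turns the equation for $\eta$ into $\diff\vec{\varphi} = (A\otimes I)\vec{\varphi}\diff t + (B\otimes I)\diff\hat w$ with $\hat w := Lw$, which is a $q$-dimensional Wiener process of diffusion $L(\gamma^2 I)L^\top = \gamma^2 M$. The initial value transforms the same way: $\vec{\varphi}(t_0) = (I_{\nu+1}\otimes L)\eta(t_0)$ is Gaussian with covariance $(I_{\nu+1}\otimes L)(\gamma^2 C_0\otimes I)(I_{\nu+1}\otimes L)^\top = \gamma^2 C_0\otimes M$ and mean $m_0\otimes(L\onebb)$, which is the stated $m_0\otimes\onebb$ for the zero-mean prior at hand (where $m_0 = 0$).

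An alternative route avoids the factorisation entirely. Both sides are Gaussian processes --- the left because $\vec{\varphi}$ is a linear (differential) functional of the Gaussian process $\varphi$, with the required smoothness of $k_t$ guaranteed by \cref{assumption:markov_sde}, the right because it solves a linear SDE with Gaussian initial value --- so it suffices to match their (zero) means and their cross-covariance functions. Substituting the ansatz $P(t) = \tilde P(t)\otimes M$ into the moment (Lyapunov) ODE $\dot P = (A\otimes I)P + P(A\otimes I)^\top + \gamma^2(BB^\top)\otimes M$ of the Kronecker SDE recovers precisely the scalar moment ODE satisfied by the covariance of $\vec{\upsilon}$ in \cref{assumption:markov_sde}, and $e^{(A\otimes I)(t-s)} = e^{A(t-s)}\otimes I$ propagates cross-covariances consistently with the tensor structure of $k_t\otimes M$. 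I expect the only genuine obstacle to be organisational: pinning down the index ordering so that ``take the $q$-fold copy'' and ``form the derivative stack $\vec{(\cdot)}$'' commute in exactly the way the factors $A\otimes I$, $B\otimes I$, $m_0\otimes\onebb$ and $C_0\otimes M$ prescribe, and then chasing the Kronecker mixed-product identities through the drift, diffusion and initial-condition terms without transpose errors; the probabilistic content --- that linear images of Gaussians and of Wiener processes stay Gaussian and Wiener with the transformed covariance --- is immediate.
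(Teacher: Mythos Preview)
The paper does not prove this lemma; it is stated with a citation to \citet{solin2016stochastic} and then used. So there is no in-paper argument to compare against, and your proposal stands as a self-contained proof.

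Your Cholesky-factor route is correct and clean: build $q$ independent scalar copies of the process from \cref{assumption:markov_sde}, stack them into the Kronecker-structured SDE with identity spatial covariance, then push through the constant linear map $I_{\nu+1}\otimes L$ and let the mixed-product identity carry the drift, diffusion, and initial covariance through. The one soft spot you already flag---the initial mean transforms to $m_0\otimes(L\onebb)$ rather than $m_0\otimes\onebb$---is harmless because the zero-mean GP hypothesis forces $m_0=0$; it would be cleaner to say this up front rather than as an aside at the end. Your implicit use of positive semidefiniteness of $M$ (to factor $M=LL^\top$) is justified by the requirement that $\gamma^2 k_t\otimes M$ be a valid covariance, even though the lemma statement only says ``a matrix''. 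The alternative Lyapunov-equation route you sketch is also sound and is closer in spirit to how such results are typically argued in the spatiotemporal GP literature that the paper cites.
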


\cref{lem:sde_representation_discretised} suggests that a spatiotemporal prior (\cref{assumption:separable_prior_covariance}) may be restricted to a spatial grid without losing the computational benefits that SDE priors provide.
Abbreviate $U(t) := u(t, \Xbb)$.
\cref{lem:sde_representation_discretised} gives rise to an SDE representation for $U(t)$ by choosing $M = k_x(\Xbb, \Xbb)$.
The same holds for the error model:
Recall the definition of the error covariance $E$ from \cref{eq:error_matrix_E} (or \cref{eq:error_matrix_E_localised} respectively, if the localised version is used).
$\xi \sim \gp(0, \gamma^2  k_t \otimes E)$ admits a state-space formulation due to \cref{lem:sde_representation_discretised}.
Through
\begin{subequations}
  \begin{align}
    D u(t, & \Xbb) +  \xi(t)                                                  \\
           & \sim \gp(0,\gamma^2  k_t \otimes [D k_x(\Xbb, \Xbb) D^\top + E]) \\
           & = \gp(0,\gamma^2  k_t \otimes (\Dcal \Dcal^* k_x)(\Xbb, \Xbb)])
  \end{align}
\end{subequations}
it is evident that $\xi$ is an appropriate prior model for the time-evolution of the spatial discretisation error.
This error being part of the probabilistic model is the advantage of PNMOL over non-probabilistic versions.

\subsection{Information Model}
The priors over $U$ and $\xi$ become a probabilistic numerical PDE solution by conditioning $U$ and $\xi$ on ``solving the PDE'' at a number of grid points as follows.
Recall from \cref{eq:sum_independent_gp} how $\Dcal u(t, \Xbb)$ is approximated by $D u(t, \Xbb) + \xi$.
The residual process
\begin{align}
  r(t) := \dot U(t) - F(t, \Xbb, U(t), D U(t) + \xi)
\end{align}
measures how well realisations of $\vec{U}$ and $\vec\xi$ solve the PDE.
$\dot U$ and $U$ are components in $\vec{U}$, therefore all operations are tractable given the extended state vectors $\vec{U}$ and $\vec{\xi}$.
Conditioning $\vec{U}$ and $\vec\xi$ on $r(t) = 0$ for all $t$ yields a probabilistic PDE solution. However, in practice, we need to discretise the time variable first in order to be able to compute the (approximate) posterior.

\subsection{Time-Discretisation}
Let $\Tbb := (t_0, ..., t_K)$ be a grid on $[t_0, t_\text{max}]$.
Define the time-steps $h_k := t_{k+1} - t_k$.
Restricted to the time grid,
\begin{subequations}\label{eq:discrete-time-transitions}
  \begin{align}
    \vec{U}(t_{k+1}) \mid \vec{U}(t_{k})     & \sim \Ncal(\Phi(h_k) \vec{U}(t_k), \gamma^2 \Sigma_U(h_k))    \\
    \vec{\xi}(t_{k+1}) \mid \vec{\xi}(t_{k}) & \sim \Ncal(\Phi(h_k) \vec{\xi}(t_k), \gamma^2\Sigma_\xi(h_k))
  \end{align}
\end{subequations}
with transition matrix
\begin{subequations}
  \begin{align}
    \Phi(h_k)   := \breve{\Phi}(h_k) \otimes I := \exp(A h_k)\otimes I
  \end{align}
\end{subequations}
and process noise covariance matrices
\begin{subequations}\label{eq:process-noise-covariances}
  \begin{align}
    \Sigma_U(h_k)       & := \breve{\Sigma}(h_k) \otimes k_x(\Xbb, \Xbb)                                         \\
    \Sigma_\xi(h_k)     & := \breve{\Sigma}(h_k) \otimes E                                                       \\
    \breve{\Sigma}(h_k) & := \int_0^{h_k} \breve{\Phi}(h_k-\tau)B B^\top  \breve{\Phi}(h_k-\tau)^\top\diff \tau.
  \end{align}
\end{subequations}
$\breve{\Phi}$ and $\breve{\Sigma}$ can be computed efficiently with matrix fractions \citep{sarkka2019applied}.
Integrated Wiener process priors, their time-discretisations, and practical considerations for implementation of high-orders are discussed in the probabilistic ODE solver literature \citep[e.g.][]{tronarp2021bayesian,kramer2020stable}.
On the discrete-time grid, the information model reads
\begin{align}\label{eq:discrete_likelihood}
  \begin{split}
    &r(t_k)  \mid \vec{U}(t_k), \vec{\xi}(t_k)                    \\
    & \sim \delta[\dot U(t_k) - F(t, \Xbb, U(t_k), D U(t_k) + \xi(t_k))].
  \end{split}
\end{align}
The complete setup is depicted in \cref{fig:pgm_mol}.
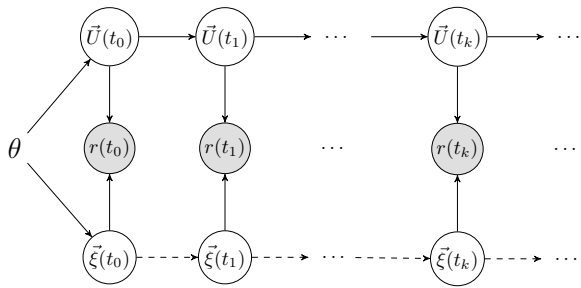
\begin{figure}[t!]
  \centering
  \scalebox{0.75}{
    \begin{tikzpicture}

      \node[latent] (u0) {\(\vec{U}(t_0)\)};%
      \node[latent, right = of u0] (u1) {\(\vec{U}(t_1)\)};%
      \node[const, right = of u1] (etc1) {~~\dots\quad~};%
      \node[latent, right = of etc1] (un) {\(\vec{U}(t_k)\)};%
      \node[const, right = of un] (etc2) {~~\dots\quad~};%

      \node[obs, below = of u0] (z0) {\(r(t_0)\)};%
      \node[obs, below = of u1] (z1) {\(r(t_1)\)};%
      \node[const, right = of z1] (etc5) {~~\dots~~};%
      \node[obs, below = of un] (zn) {\(r(t_k)\)};%
      \node[const, right = of zn] (etc6) {~~\dots~~};%

      \node[latent, below = of z0] (e0) {\(\vec{\xi}(t_0)\)};%
      \node[latent, below = of z1] (e1) {\(\vec{\xi}(t_1)\)};%
      \node[const, right = of e1] (etc3) {~~\dots~~};%
      \node[latent, below = of zn] (en) {\(\vec{\xi}(t_k)\)};%
      \node[const, right = of en] (etc4) {~~\dots~~};%

      \node[const, left = of z0, outer sep=3] (kX) {\Large $\theta$};%

      \edge {u0} {u1}
      \edge {u1} {etc1}
      \edge {etc1} {un}
      \edge {un} {etc2}

      \edge[dashed] {e0} {e1}
      \edge[dashed] {e1} {etc3}
      \edge[dashed] {etc3} {en}
      \edge[dashed] {en} {etc4}

      \edge{u0, e0}{z0}
      \edge{u1, e1}{z1}
      \edge{un, en}{zn}

      \edge{kX}{u0, e0}

    \end{tikzpicture}
  }
  \caption{\textit{Graphical visualisation of PNMOL:}
    The states $\vec{\xi}$ and $\vec{U}$ are conditionally independent given $\theta = (k_x, k_t, \Xbb, \Dcal, \gamma, \Tbb)$.
    \emph{The existence of the bottom row distinguishes PNMOL from other PDE solvers.}
    The dependencies between the $\xi(t_k)$, indicated by the dashed lines, are optional; removing them improves the efficiency of the algorithm; details are in \cref{sec:white-noise-approximation}.
  }
  \label{fig:pgm_mol}
\end{figure}

\subsection{Inference}
\label{subsec:inference}
Altogether, the probabilistic method of lines targets
\begin{align}\label{eq:probabilistic_pde_solution}
  p\left(\vec{U}, \vec{\xi} \,\left|\, [r(t_k) = 0]_{k=0}^K \right.\right)
\end{align}
which we will refer to as the \emph{probabilistic numerical PDE solution} of \cref{eq:pde}.
The precise form of the PN PDE solution is intractable because $F$ (and therefore the likelihood in \cref{eq:discrete_likelihood}) are nonlinear.
It can be approximated efficiently with techniques from extended Kalman filtering \citep{sarkka2019applied}, which is based on approximating the nonlinear PDE vector field $F$ with a first-order Taylor series. Inference in the linearised model is feasible with Kalman filtering and smoothing.
\citet{sarkka2019applied} summarise the details, and \citet{tronarp2019probabilistic} explain specific considerations for ODE solvers.

The role of $\xi$ and its impact on the posterior distribution (i.e.~the PDE solution) is comparable to that of a latent force in the ODE \citep{alvarez2009latent,hartikainen2012state,schmidt2021probabilistic}.
The coarser $\Xbb$ is, the larger is $E$, respectively $\xi$.
A large $\xi$ indicates how increasingly severe the misspecification of the ODE becomes.
Unlike traditional PDE solvers, inference according to the information model in \cref{eq:discrete_likelihood} embraces latent discretisation errors.
It does so by an algorithm that shares similarities with the latent force inference algorithm by \citet{schmidt2021probabilistic} (compare \cref{fig:pgm_mol} to \citet[Fig. 3]{schmidt2021probabilistic}), but the sources of misspecification are different in both works.

Inference of $\xi$ is expensive because the complexity of a Gaussian filter scales as $O(K d^3)$ where $d$ is the dimension of the state-space model.
In the present case, $d=2(N+1)(\nu+1)$ because for $N+1$ spatial grid points, PNMOL tracks $\nu$ time-derivatives of $U$ and $\xi$.
Both $U$ and $\xi$ are $(N+1)$-dimensional.
If the only purpose of $\xi$ is to incorporate a measure of spatial discretisation uncertainty into the information model of the PDE solver, tracking $\xi$ in the state space is not required, as long as we introduce another approximation.

\subsection{White-Noise Approximation}
\label{sec:white-noise-approximation}

Recall $\xi \sim \gp(0, \gamma^2 k_t \otimes E)$, i.e.~$\xi$ is an integrated Wiener process in time and a Gaussian $\Ncal(0,  \gamma^2 E)$ in space.
One may relax the temporal integrated Wiener process prior to a white noise process prior; that is,
$\xi(t)$ is independent of $\xi(s)$ for $s \neq t$, and $\xi(t) \sim \Ncal(0, \gamma^2 E)$ for all $t$.
The state-space realisation of a white noise process is trivial because there is no temporal evolution (recall the dashed lines in \cref{fig:pgm_mol}).
To understand the impact of $E$ in the white-noise formulation on the information model, consider the semi-linear PDE
\begin{align}\label{eq:semilinear_pde}
  F_\text{semi}(t, x, u, \Dcal u) := \Dcal u(t,x) + f(u(t,x))
\end{align}
for some nonlinear $f$.
\cref{eq:discrete_likelihood}  becomes
\begin{subequations}\label{eq:white_noise_information}
  \begin{align}
    r_\text{white}(t_k) & \mid \vec{U}(t_k) \sim \Ncal( \rho(\vec{U}(t)), \gamma^2 E) \\
    \rho(\vec{U}(t))    & := \dot U(t) - D U(t) - f(U(t))
  \end{align}
\end{subequations}
The Dirac likelihood in \cref{eq:discrete_likelihood} becomes a Gaussian with the measurement noise $E$.
Versions of \cref{eq:white_noise_information} for other nonlinearities are in
\cref{sec:white-noise}.

The advantage of the white-noise approximation over the latent-force version is that the state-space model is precisely half the size because $\xi$ is not a state variable anymore.
Since the complexity of PNMOL depends cubically on the dimension of the state-space, half as many state variables improve the complexity of the algorithm by a factor $2^3=8$.
Arguably, $\gamma^2 E$ entering the information model as a measurement covariance may also provide a more intuitive explanation of the impact that the statistical quantification of the discretisation error has on the PDE solution:
the larger $E$, the less strictly is a zero PDE residual enforced during inference.
The error covariance $E$ regularises the impact of the information model, and, in the limit of $E \rightarrow \infty$, yields the prior as a PDE solution.
Intuitively put, for $E>0$, the algorithm puts less trust in the residual information $r(\cdot)$ than for $E=0$.

\section{HYPERPARAMETERS}
\label{sec:hyper_parameters}

\paragraph{Kernels}
As common in the literature on probabilistic ODE solvers, we use temporal integrated Wiener process priors \citep[e.g.][]{tronarp2019probabilistic,bosch2021calibrated}.
In the experiments, the order of integration is $\nu \in \{1, 2 \}$.
Using low order ODE solvers is not unusual for MOL implementations \citep{cash1996mol}.
Spatial kernels need to be sufficiently differentiable to admit the formula in \cref{eq:error_matrix_E}.
In this work, we use squared exponential kernels.
Choosing their input scale is not straightforward (recall \cref{fig:stencil_size_comparison}; we found $r=0.25$ to work well across experiments).
Other sufficiently regular kernels, e.g. rational quadratic or Mat\'ern kernels, would work as well.
Polynomial kernels recover traditional finite difference weights \citep{fornberg1988generation}, but like for any other feature-based kernel, $k(x, y) = \Phi(x)^\top \Phi(y)$ holds, thus
both summands in \cref{eq:error_matrix_E} (and in \cref{eq:error_matrix_E_localised}) cancel out. The discretisation uncertainty $E$ would be zero.
In this case, PNMOL gains similarity to the traditional method of lines combined with a probabilistic ODE solver. (The boundary conditions would be treated slightly differently; we refer to \cref{sec:boundary-conditions}.)

\paragraph{Spatial Grid}
The spatial grid can be any set of freely scattered points.
Spatial neighbourhoods can, for example, be queried from a KD tree  \citep{bentley1979algorithms}.
For the simulations in the present paper, we use equispaced grids.
PNMOL's requirements on the grid differ from, for instance, finite element methods, in that there needs to be no notion of connectivity or triangulation between the grid points.
PN finite differences only require stencils; in light of the stability results in \cref{fig:stencil_size_comparison}, we choose them maximally small.

\paragraph{Output-Scale}
The output-scale $\gamma$ calibrates the width of the posterior and can be tuned with quasi-maximum likelihood estimation.
Omitting the boundary conditions, this means (recall $r$ from \cref{eq:discrete_likelihood})
\begin{align}\label{eq:quasi_mle_output_scale}
  (\hat\gamma)^2 := \frac{1}{(N+1)(K+1)} \sum_{k=0}^K \| \Ebb[r(t_k)]\|_{\Cbb[r(t_k)]^{-1}}^2,
\end{align}
where the mean $\Ebb[r(t_k)]$ and the covariance between the output-dimensions of $r(\cdot)$ at time $t_k$, $\Cbb[r(t_k)]$, emerge from the same Gaussian approximation that computes the approximate posterior (\cref{sec:output-scale-calibration}).
\cref{eq:quasi_mle_output_scale} uses the Mahalanobis norm $\|x\|_{A}^2 = x^\top A x$.

\begin{figure*}[t!]
  \centering
  \includegraphics{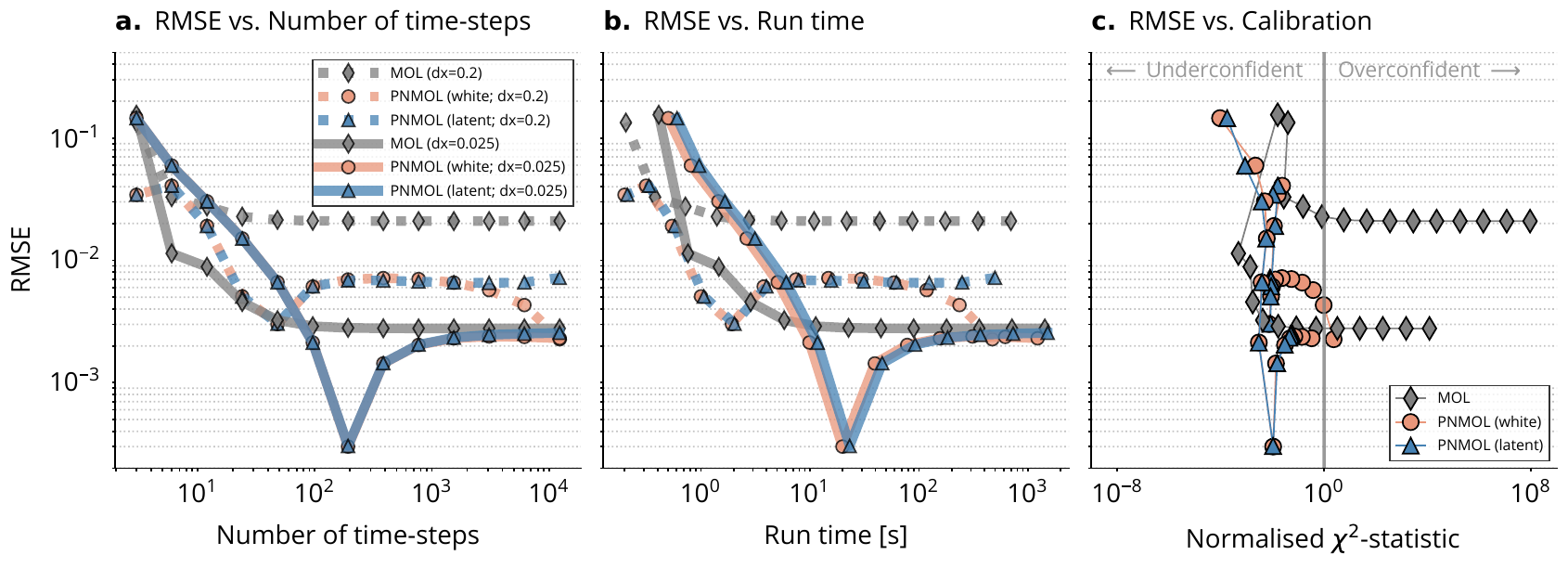}
  \caption{\textit{Quantify the global error:}
    Work vs. precision vs. calibration of PNMOL in the latent-force version (blue) and the white-noise version (orange), compared to a traditional PN ODE solver combined with conventional MOL (grey), on the spatial Lotka-Volterra model.
    Two kinds of curves are shown: one for a coarse (dotted), and one for a fine spatial mesh (solid).
    A reference is computed by discretising the spatial domain with a ten times finer mesh and solving the ODE with backward differentiation formulas.
    The RMSE of both methods stagnates once a certain accuracy is reached, but PNMOL appears to reach a slightly lower RMSE for $dx=0.2$ (left, middle; perhaps due to different treatment of boundary conditions; \cref{sec:boundary-conditions}).
    The run time of PNMOL-white is comparable to that of MOL, and the run time of PNMOL-latent is slightly longer (middle).
    The calibration of PNMOL, measured in the normalised $\chi^2$-statistic of the Gaussian posterior (so that the ``optimum'' is 1, not $d$), remains close to $1$ but is slightly underconfident.
    With decreasing time-steps, MOL is poorly calibrated.
  }
  \label{fig:calibration-lotka-volterra}
\end{figure*}

\section{RELATED WORK}
\label{sec:related_work}

Connections to non-probabilistic numerical approximation have been discussed in \cref{sec:probabilistic_discretisation,sec:localisation}.
Recall from there that the strongest connections are to unsymmetric collocation \citep{kansa1990multiquadrics,hon2001unsymmetric,schaback2007convergence}, radial-basis-function-generated finite differences \citep{driscoll2002interpolation,shu2003local,tolstykh2003using,fornberg2015primer}, and collocation methods in general. We refer to \citet{fasshauer2007meshfree,fornberg2015primer} for a more comprehensive overview.
The literature on the method of lines is covered by, e.g., \citet{schiesser2012numerical}.
\citet{dereli2013meshless,hon2014meshless} combine collocation with MOL.
None of the above exploits the correlations between spatial and temporal errors.
The significance of estimating the interplay of both error sources for MOL has been recognised by \citet{berzins1988global,lawson1991balancing,berzins1991towards}.

\citet{cockayne2017probabilistic,owhadi2015bayesian,owhadi2017multigrid,raissi2017machine,raissi2018numerical} describe a probabilistic solver for PDEs relating to symmetric collocation approaches from numerical analysis. \citet{chen2021solving} extend the ideas to non-linear PDEs.
\citet{wang2021bayesian} continue the work of \citet{chkrebtii2016bayesian} in constructing an ODE/PDE initial value problem solver that uses (approximate) conjugate Gaussian updating at each time-step.
\citet{duffin2021statistical} solve time-dependent PDEs by discretising the spatial domain with finite elements, and applying ensemble and extended Kalman filtering in time. They build on the paper by \citet{girolami2020statistical}.
\citet{conrad2017statistical,abdulle2021probabilistic} compute probabilistic PDE solutions by randomly perturbing non-probabilistic solvers.
All of the above discard the uncertainty associated with discretising $\Dcal \approx D$.
Some papers achieve ODE-solver-like complexity for time-dependent problems \citep{wang2021bayesian,chkrebtii2016bayesian,duffin2021statistical}, while others compute a continuous-time posterior \citep{cockayne2017probabilistic,owhadi2015bayesian,owhadi2017multigrid,raissi2017machine,raissi2018numerical,chen2021solving}.
PNMOL does both.

The efficiency of the PDE filter builds on recent work on filtering-based probabilistic ODE solvers \citep{schober2019probabilistic,tronarp2019probabilistic,kersting2020convergence,kersting2016active,bosch2021calibrated,kramer2020stable,tronarp2021bayesian} and their applications \citep{kersting2020differentiable,schmidt2021probabilistic}.
\citet{frank2020probabilistic} apply an ODE filter to solve discretised PDEs.
Similar algorithms have been developed for other types of ODEs \citep{hennig2014probabilistic,john2019goode,kramer2021linear}.
The papers by \citet{chkrebtii2016bayesian,conrad2017statistical,abdulle2020random}, prominently feature ODEs.

\section{EXPERIMENTS}
\label{sec:experiments}

\paragraph{Code}
The code for the experiments\footnote{\url{https://github.com/schmidtjonathan/pnmol-experiments}}, and an implementation of the PN discretisation and PN finite differences described in \Cref{sec:probabilistic_discretisation,sec:localisation}\footnote{\url{https://github.com/pnkraemer/probfindiff}} are on GitHub.

\paragraph{Quantify The Global Error}
We investigate how the PN method of lines impacts numerical uncertainty quantification.
As a first experiment, we solve a spatial Lotka-Volterra model \citep{holmes1994spatial}, i.e. nonlinear predator-prey dynamics with spatial diffusion, on a range of temporal and spatial resolutions.
From the results in \cref{fig:calibration-lotka-volterra}, it is evident how the spatial accuracy limits the overall accuracy.
But also how traditional ODE filters combined with MOL fail to quantify numerical uncertainty reliably.
At any parameter configuration, it is \emph{either} the spatial or the temporal discretisation that dominates the error.
Decreasing the time-step alone lets the error not only stagnate but worsens the calibration because the ODE solver does not know how bad the spatial approximation is.

\paragraph{Which Error Dominates?}
To further examine which one of \emph{either} $\Delta x$ or $\Delta t$ dominates the approximation, we consider a second example: a spatial SIR model \citep{gai2020sir}.
We investigate more formally how increasing either, the time-resolution vs. the space-resolution, leads to a low overall error.
The results are in \cref{fig:work-vs-precision-sir}, and confirm the findings from \cref{fig:calibration-lotka-volterra} above.
\begin{figure}[t!]
  \centering
  \includegraphics{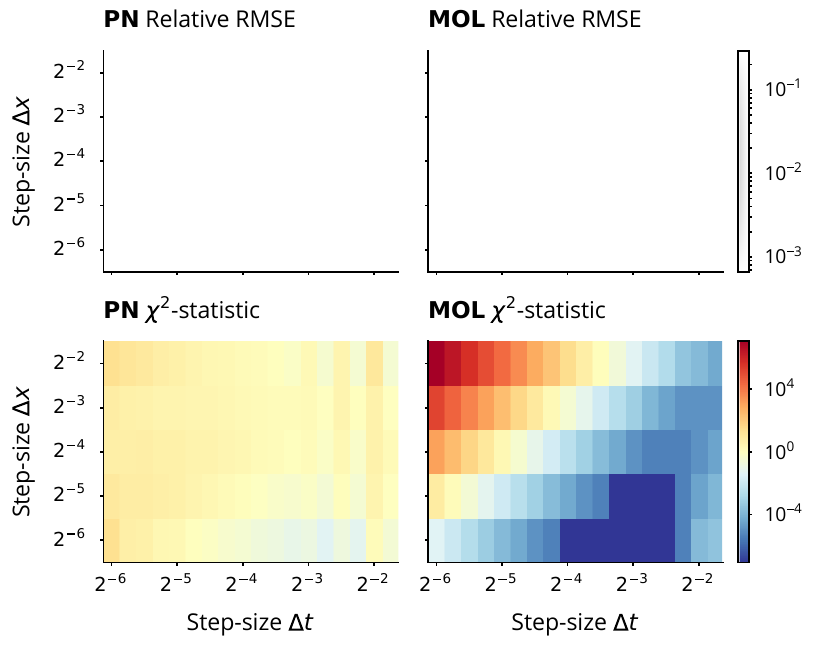}
  \caption{\textit{Which error dominates?}
    The relative RMSE is only small if \emph{both} $\Delta t$ and $\Delta x$ are small, which affects all solvers: PNMOL (top left) as well as traditional MOL combined with PN ODE solvers (top right).
    PN w/ MOL is severely overconfident for large $\Delta x$ and small $\Delta t$ (bottom right), while PNMOL delivers a calibrated posterior distribution (bottom left).
  }
  \label{fig:work-vs-precision-sir}
\end{figure}
Traditional PN ODE solvers with conventional MOL are unaware of the true, global approximation error.
PNMOL is not, despite being equally accurate.

\section{CONCLUSION}
We presented probabilistic strategies for discretising PDEs, and for making use of the resulting quantification of spatial discretisation uncertainty in a probabilistic ODE solver.
We discussed practical considerations, including sparsification of the differentiation matrices, keeping the dimensionality of the state-space low, and hyperparameter choices.
Altogether, and unlike traditional PDE solvers, the probabilistic method of lines unlocks quantification of spatiotemporal correlations in an approximate PDE solution, all while preserving the efficiency of adaptive ODE solvers. This makes it a valuable algorithm in the toolboxes of probabilistic programs and differential equation solvers and may serve as a backbone for latent force models, inverse problems, and differential-equation-centric machine learning.

\subsection*{Acknowledgements}

The authors gratefully acknowledge financial support
by the German Federal Ministry of Education and
Research (BMBF) through Project ADIMEM (FKZ
01IS18052B).
They also gratefully acknowledge finan-
cial support by the European Research Council through
ERC StG Action 757275 / PANAMA; the DFG Cluster
of Excellence ``Machine Learning - New Perspectives
for Science'', EXC 2064/1, project number 390727645;
the German Federal Ministry of Education and Re-
search (BMBF) through the Tübingen AI Center
(FKZ: 01IS18039A); and funds from the Ministry of
Science, Research and Arts of the State of Baden-
Württemberg.
Moreover, the authors thank the Inter-
national Max Planck Research School for Intelligent
Systems (IMPRS-IS) for supporting Nicholas Krämer.

The authors thank Nathanael Bosch for helpful feed-
back on the manuscript.
\medskip

\bibliography{bibfile}


\clearpage
\appendix

\thispagestyle{empty}

\onecolumn \makesupplementtitle

This supplement explains details regarding:
(i) treatment of boundary conditions (\cref{sec:boundary-conditions});
(ii) the probabilistic numerical discretisation from \cref{sec:probabilistic_discretisation} being a Bayesian probabilistic numerical method (\cref{sec:bpnm});
(iii) quasi-maximum-likelihood-estimation of the output-scale $\gamma$ (\cref{sec:output-scale-calibration});
and (iv) inference schemes for fully non-linear systems of partial differential equations (\cref{sec:white-noise}).

Recall the abbreviations from the main paper:
partial differential equation (PDE),
ordinary differential equation (ODE),
probabilistic numerics (PN),
method of lines (MOL),
probabilistic numerical method of lines (PNMOL).

\section{BOUNDARY CONDITIONS}
\label{sec:boundary-conditions}

\subsection{Setup}
In this section, we explain how PNMOL treats boundary conditions.
Recall the differential operator $\Dcal$ from \cref{eq:pde}, the differentiation matrix $D$ and the error covariance matrix $E$ from \cref{eq:error_matrix_E}, respectively \cref{eq:differentiation_matrix_D_localised,eq:error_matrix_E_localised}.
Also recall from \cref{subsec:inference} that in the ODE solve, each pair of states $(\vec{U}, \vec\xi)$ is conditioned on a zero PDE residual
\begin{align}\label{eq:information-operator-supplement}
  r(t) := \dot U(t) - F(t, \Xbb, U(t), D U(t) + \xi) \overset{!}{=} 0
\end{align}
where $U$, $\dot U$, and $\xi$ are components of the extended state vectors $\vec{U}$ and $\xi$.
The remainder of this section extends this framework to include boundary conditions $\Bcal u(t,x) = g(x)$ for all $x \in \partial \Omega$ and some function $g$ (\cref{eq:pde}).

\subsection{Discretised Boundary Conditions}
To augment the PDE residual information with boundary conditions, we begin by discretising $\Bcal$ probabilistically; either with global collocation as in \cref{sec:probabilistic_discretisation} or with PN finite differences as in \cref{sec:localisation}.
Below, we show the former, because the latter becomes accessible with the same modifications from \cref{sec:localisation}.
Let $u_x \sim \gp(0, \gamma^2 k_x)$.
Defining the differentiation matrix $B$ and the error covariance $R$,
\begin{align}
  B                  := (\Bcal k_x) (\Xbb, \Xbb) k_x(\Xbb, \Xbb)^{-1},        \quad
  R                  := (\Bcal^2 k_x)(\Xbb, \Xbb) - B k_x(\Xbb, \Xbb) B^\top,
\end{align}
we have access to boundary conditions:
For any $y \in \Rbb^{N+1}$,
\begin{align}\label{eq:bcond_as_conditioning}
  p((\Bcal u_x) (\Xbb) \mid u_x(\Xbb) = y) \sim \Ncal(B y, \gamma^2 R)
\end{align}
holds.
For Dirichlet boundary conditions, $\Bcal$ is the identity, thus $B = I_{N+1}$ is the identity matrix and $R \equiv 0$ is the zero matrix.
For Neumann conditions, $B$ is the discretised derivative along normal coordinates, and $R$ is generally nonzero (recall the explanation in \cref{sec:hyper_parameters} of the cases in which the error matrix is zero).
The derivation surrounding \cref{eq:bcond_as_conditioning} above suggests how the present framework would deal with boundary conditions that are subject to additive Gaussian noise.

\subsection{Latent Force}
Define the latent force $\vartheta=\vartheta(t) \sim \gp(0,\gamma^2  k_t \otimes R)$, which will play a role similar to $\xi$ but for the boundary conditions.
$\vartheta$ inherits a stochastic differential equation formulation from $k_t$ just like $\xi$ does (recall \cref{lem:sde_representation_discretised}).
Denote the stack of $\vartheta$ and its first $\nu \in \Nbb$ time-derivatives by $\vec{\vartheta}$.
Let $\Xbb_B \subset \Xbb$ be the subset of boundary points in $\Xbb$.
The full information operator (i.e.\ an extended version of \cref{eq:information-operator-supplement}) includes $\left.\Bcal u(t, x)\right|_{\partial \Omega} = g(x)$ as
\begin{align}
  r_\text{full}(t) :=
  \begin{pmatrix}
    \dot U(t) - F(t, \Xbb, U(t), D U(t) + \xi) \\
    \Bcal U(t) - g(\Xbb_B) - \vartheta(t)
  \end{pmatrix}
  \overset{!}{=} 0.
\end{align}
$r_\text{full}$ depends on $\vec{U}$, $\vec{\xi}$, and $\vec{\vartheta}$.
Conditioning $(\vec{U}, \vec{\xi}, \vec{\vartheta})$ on $r_\text{full}(t) = 0$ yields a probabilistic PDE solution that knows boundary conditions.
Notably, the bottom row in $r_\text{full}$ is linear in  $\left(\vec{U}, \vec{\xi}, \vec{\vartheta}\right)$ so there is no linearisation required to enable (approximate) inference.

\subsection{Comparison To MOL}
Boundary conditions in PNMOL enter through the information operator, i.e.\ on the same level as the PDE vector field $F$, which is different to conventional MOL:
In MOL, one only tracks the state variables in the interior of $\Omega$, i.e. $u(t, \Xbb \backslash \Xbb_B)$ because boundary conditions can be inferred from the interior straightforwardly.
For Dirichlet conditions, the boundary values are always dictated by $g(\Xbb_B)$.
Let $\pderiv{}{n}$ be the directional derivative taken in the direction normal to the boundary $\partial \Omega$.
For Neumann conditions, for some small $\lambda > 0$, we can approximate
\begin{align}
  \pderiv{}{n}u(t, x) = \frac{u(t, x) - u(t, x-\lambda n)}{\lambda} \overset{!}{=}g(x).
\end{align}
This viewpoint suggests $u(t,x) = \lambda g(x) + u(t, x-\lambda n)$ and $\lambda$ can be chosen such that $x - \lambda n$ is the nearest neighbor of $x \in \Xbb$.
While tracking only the state values in the interior of the domain has the advantage that the ODE system emerging from the method of lines is smaller than for PNMOL (which perhaps explains why in \cref{fig:calibration-lotka-volterra}, PNMOL achieves a lower error than MOL), MOL has two disadvantages:
(i) non-deterministic boundary conditions are not straightforward to include;
(ii) the finite difference approximation of Neumann conditions introduces errors.
PNMOL does not face the first issue and quantifies the error mentioned by the second issue.

\section{DISCRETISATION AS A BAYESIAN PROBABILISTIC NUMERICAL METHOD}
\label{sec:bpnm}

\citet{cockayne2019bayesian} introduce a formal definition of Bayesian and non-Bayesian probabilistic numerical methods.
More specifically, they define a probabilistic numerical method to be a tuple of an \emph{information operator} and a \emph{belief update operator}.
A PN method then becomes Bayesian if its output is the pushforward of a specific conditional distribution through the \emph{quantity of interest}.
All of those objects can be derived for the PN discretisation of $\Dcal$.
The same is true for the boundary conditions explained in \cref{sec:boundary-conditions} but is omitted in the following.
The proof of the statement below mirrors the explanation why Bayesian quadrature is a Bayesian PN method in Section 2.2 of the paper by \citet{cockayne2019bayesian}.
\begin{proposition}
  The approximation of $(\Dcal u_x)(\Xbb)$ with $D$ and $E$ as in \cref{eq:approximation_of_d,eq:error_matrix_E}, using evaluations of $u_x$ at a grid, is a Bayesian probabilistic numerical method.
\end{proposition}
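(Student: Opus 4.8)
The plan is to follow the template set by \citet{cockayne2019bayesian} for Bayesian quadrature (their Section 2.2), and instantiate each component of a probabilistic numerical method for the task of approximating $(\Dcal u_x)(\Xbb)$. First I would fix the abstract setup: the \emph{latent variable} is the function $u_x$, equipped with the prior $u_x \sim \gp(0, \gamma^2 k_x)$; the \emph{quantity of interest} is the linear functional $Q(u_x) := (\Dcal u_x)(\Xbb) \in \Rbb^{N+1}$; and the \emph{information operator} is the evaluation map $A(u_x) := u_x(\Xbb) \in \Rbb^{N+1}$. These are exactly the ingredients in Definition 1 (and the surrounding text) of \citet{cockayne2019bayesian}: a PN method is a pair consisting of the information operator $A$ together with a belief update operator that maps observed information $y = A(u_x)$ to a distribution over $Q(u_x)$.

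Next I would identify the belief update operator produced by our construction. From \cref{eq:approximation_of_d} and \cref{eq:error_matrix_E} the method returns, on observing $u_x(\Xbb) = y$, the Gaussian $\Ncal(Dy, \gamma^2 E)$ with $D = W_\Xbb(\Xbb)$ and $E = \hat k(\Xbb, \Xbb)$. The key step is then to verify the Bayesian property: this output must coincide with the pushforward through $Q$ of the conditional distribution $p(u_x \mid A(u_x) = y)$. Because $u_x$ is a Gaussian process and both $A$ and $Q$ are bounded linear operators, the joint law of $(A(u_x), Q(u_x))$ is Gaussian, so the conditional $p(Q(u_x) \mid A(u_x) = y)$ is Gaussian with mean and covariance given by the standard Gaussian conditioning formulas. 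I would then simply read off that the conditional mean is $(\Dcal k_x)(\Xbb, \Xbb) k_x(\Xbb,\Xbb)^{-1} y = D y$ and the conditional covariance is $(\Dcal^2 k_x)(\Xbb,\Xbb) - D k_x(\Xbb,\Xbb) D^\top = \gamma^2 E$ — which is precisely \cref{eq:conditioned_gp_moments} evaluated on the grid, and precisely the output of the method. Some care is needed with where the output scale $\gamma^2$ sits: $\gamma^2$ multiplies every block of the joint covariance, so it cancels in the mean and factors out of the covariance, consistent with $\xi_x \sim \gp(0, \gamma^2 \hat k)$.

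The main obstacle, as far as there is one, is not a computation but a bookkeeping/measure-theoretic point: \citet{cockayne2019bayesian} phrase their definitions over fairly general (possibly infinite-dimensional) sample spaces, so I would need to state the ambient space for $u_x$ (a suitable RKHS-containing Banach space, or the space of continuous functions on $\bar\Omega$ on which $\gp(0,\gamma^2 k_x)$ is supported and on which $\Dcal$ and point evaluation are well-defined and measurable), and check that $A$ and $Q$ are measurable linear maps into $\Rbb^{N+1}$ so that the disintegration $p(u_x \mid A(u_x)=y)$ exists and pushes forward correctly. This is where the footnote-level regularity assumptions on $k_x$ (enough differentiability for $\Dcal^2 k_x$ to make sense, cf. \cref{sec:hyper_parameters}) and on $\Omega$ enter. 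Once that framing is in place, the proof is a short invocation of Gaussian conditioning plus the observation that our stated output equals the pushforward, so I would keep it terse and mirror the structure of the quadrature argument rather than re-deriving conditioning from scratch.
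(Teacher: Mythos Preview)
Your proposal is correct and follows essentially the same approach as the paper: identify the prior, information operator, belief update, and quantity of interest in the sense of \citet{cockayne2019bayesian}, and observe that the method's output coincides with the pushforward of the GP conditional through $Q$, i.e.\ \cref{eq:conditioned_gp_moments} evaluated on the grid. Your version is simply more detailed than the paper's terse paragraph (in particular your explicit verification of the conditional mean/covariance and your remarks on the ambient Banach space and measurability), but the structure is identical.
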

\begin{proof}
  The prior measure is the GP prior $\gp(0, \gamma^2 k_x)$ and defined over some separable Banach space of sufficiently differentiable, real-valued functions.
  The information operator $\Ical[\varphi] := \varphi(\Xbb)$ evaluates a function $\varphi$ at the grid $\Xbb$.
  The belief update restricts the prior measure to the set of functions that interpolate $\varphi(\Xbb)$ (this is standard Gaussian process conditioning).
  The quantity of interest is the derivative $\Dcal u_x$, which results in the posterior distribution in \cref{eq:conditioned_gp_moments}.
\end{proof}

\section{QUASI-MAXIMUM-LIKELIHOOD ESTIMATION OF THE OUTPUT SCALE}
\label{sec:output-scale-calibration}

Proving the validity of the estimator in \cref{eq:quasi_mle_output_scale} parallels similar statements for similar settings \citep{tronarp2019probabilistic,bosch2021calibrated,kramer2021linear} and consists of two phases:
(i) showing that the posterior covariances are of the form $C_k = \gamma^2 \breve{C}_k$ for some $\breve{C}_k$, $k=0, ..., K$;
(ii) deriving the maximum likelihood estimators.

To show the first claim, recall the discrete-time transition of $U$ and $\xi$ from \cref{eq:discrete-time-transitions}
and the information model from \cref{eq:discrete_likelihood}.
Then, the fully discretised state-space model is
\begin{subequations}
  \begin{align}
    \vec{U}(t_0)                             & \sim \Ncal(m_0 \otimes \onebb, \gamma^2 C_0 \otimes k_x(\Xbb, \Xbb))      \\
    \vec{\xi}(t_0)                           & \sim \Ncal(m_0 \otimes \onebb, \gamma^2 C_0 \otimes E)                    \\
    \vec{U}(t_{k+1}) \mid \vec{U}(t_k)       & \sim \Ncal(\Phi(h_k) \vec{U}(t_k), \gamma^2\Sigma_U(h_k))                 \\
    \vec{\xi}(t_{k+1}) \mid \vec{\xi}(t_k)   & \sim \Ncal(\Phi(h_k) \vec{\xi}(t_k), \gamma^2\Sigma_\xi(h_k))             \\
    r(t_k) \mid \vec{U}(t_k), \vec{\xi}(t_k) & \sim \delta \left( \dot U(t_k) - F(t, \Xbb, U(t_k), D U(t_k) + \xi(t_k)).
    \right)
  \end{align}
\end{subequations}
All transitions have a process noise that depends multiplicatively on $\gamma^2$.
The Dirac likelihood is noise-free.
Therefore, the posterior covariance depends multiplicatively on $\gamma$ as well (which can be proved by an induction that is identical to the one in Appendix C of the paper by \citet{tronarp2019probabilistic}).

To show the second claim, let a linearised observation model be given by
\begin{align}
  r(t_k) \mid \vec{U}(t_k), \vec{\xi}(t_k) & \sim \delta \left(
  H
  \begin{pmatrix}
      \vec{U}(t_k) \\
      \vec{\xi}(t_k)
    \end{pmatrix}
  + b
  \right)
\end{align}
for appropriate $H$ and $b$ (which are explained in \cref{sec:white-noise} below).
Now, the distribution $p(r(t_k) \mid r(t_{k-1}))$ is Gaussian and thus fully defined by its mean $\Ebb[r(t_k)]$ and its covariance $\Cbb[r(t_k)]$.
The covariance $\Cbb[r(t_k)]$ is again of the form $\gamma^2 \breve{S}_k$ for some $\breve{S}_k$ because every filtering covariance is, and the information operator is noise-free.
Due to the prediction error decomposition \citep{schweppe1965evaluation},
\begin{align}
  p(r(t_0), ..., r(t_K) \mid \gamma) = p(r(t_0) \mid \gamma) \prod_{k=1}^N p(r(t_k) \mid r(t_{k-1}), \gamma).
\end{align}
Recall that the dimension of $r(t_k)$ is $N+1$ because $\Xbb$ consists of $N+1$ grid points.
Because everything is Gaussian, the negative log-likelihood (as a function of $x$) decomposes into
\begin{align}
  - \log p(x; r(t_0), ..., r(t_K) \mid \gamma) = \frac{1}{2} \left(\sum_{k=0}^K \|x - \Ebb[r(t_k)]\|_{\Cbb[r(t_k)]^{-1}}^2 - {(K+1)(N+1)} \log \gamma^2 \right).
\end{align}
Setting the $\gamma$-derivative of the negative log-likelihood to zero, i.e.\ maximising it with respect to $\gamma$, yields the MLE from \cref{eq:quasi_mle_output_scale}.
Overall, the derivation is very similar to those provided by \citet{tronarp2019probabilistic,bosch2021calibrated} for ODE initial value problems, and \citet{kramer2021linear} for ODE boundary value problems.

\section{FULLY NONLINEAR (WHITE-NOISE) APPROXIMATE INFERENCE}
\label{sec:white-noise}
The main paper explained white-noise approximations only using the example of semilinear PDEs.

This section recalls the basic concepts for linear PDEs and the more general case of fully nonlinear PDEs.
The latter goes together with an explanation of Taylor-series linearisation of the nonlinear PDE vector field, so it also supports the statements in \cref{subsec:inference}.
The central idea is to regard the PDE vector field $F(t,x,u(t,x), \Dcal u(t,x))$ as a function of $U$ and $\xi$, instead of $u$ and $\Dcal u$, and linearise each non-linearity with a first order Taylor series.

\subsection{Linear PDEs}
At first, we explain the general idea for a linear PDE
\begin{align}
  F(t,x,u(t,x), \Dcal u(t,x)) = A u(t,x) + B \Dcal u(t,x)
\end{align}
for some coefficients $A$ and $B$.
If $u$ is scalar-valued, $A$ and $B$ are scalars.
If $u$ is vector-valued, $A$ and $B$ are matrices.
$\Dcal u = D u + \xi$ allows for recasting the vector field $F$ as a function of $u$ and $\xi$, instead of $u$ and $\Dcal u$,
\begin{align}
  \tilde F(t, U(t), \xi(t)) := F(t, \Xbb, U(t), \xi(t))= A U(t) + B D U(t) + B \xi(t).
\end{align}
Since this equation is linear in $u$ and $\Dcal u$, thus also in $U$ and $\xi$, there is no need for Taylor-series approximation.
The linear measurement model emerges as
\begin{align}\label{eq:linear-pde-residual}
  r_\text{linear}(t_k) \mid \vec{U}(t_k), \vec{\xi}(t_k) \sim \delta \left[ \dot U(t_k) - A U(t_k) - B D U(t_k) - B \xi(t_k) \right].
\end{align}
If the temporal evolution of $\xi$ is ignored through replacing $k_t$ with a white-noise approximation as in \cref{sec:white-noise-approximation}, \cref{eq:linear-pde-residual} becomes
\begin{align}
  r_\text{linear}(t_k) \mid \vec{U}(t_k) \sim \Ncal \left( \dot U(t_k) - A U(t_k) - B D U(t_k), B E B^\top\right).
\end{align}
The same principle can be generalised to fully nonlinear problems as follows.

\subsection{Nonlinear PDEs}
Next, consider a fully nonlinear PDE vector field $F(t,x,u(t,x), \Dcal u(t,x))$.
This general setting includes semilinear and quasilinear systems of equations.
As before, we discretise the spatial domain using $\Xbb$, use $\Dcal u = D U + \xi$, rewrite
\begin{align}
  \tilde F(t,U(t),\xi(t)) := F(t,\Xbb,u(t,\Xbb), D u(t,\Xbb) + \xi(t)),
\end{align}
and infer the solution via $\tilde F$.
Since $\tilde F$ is nonlinear, we have to linearise it before proceeding.
Let $\nabla_U \tilde F$ be the derivative of $\tilde F$ with respect to $U$, and $\nabla_\xi \tilde F$ be its $\xi$-counterpart.
We linearise $\tilde F$ at some $\eta = (\eta_U, \eta_\xi) \in \Rbb^{2(N+1)}$,
\begin{align}
  \tilde F(t, U(t), \xi(t))
   & \approx \tilde F(t, \eta_U, \eta_\xi) + \nabla_U \tilde F (t, \eta_U, \eta_\xi)(U(t) - \eta_U)
  + \nabla_\xi \tilde F (t, \eta_U, \eta_\xi)(\xi(t) - \eta_\xi )                                   \\
   & =: H_U U(t) + H_\xi \xi(t) + b(t).
\end{align}
$\eta_U$ and $\eta_\xi$ are commonly chosen as the predicted mean of $\xi$ and $U$, which corresponds to extended Kalman filtering \citep{sarkka2019applied}.
As a result of the linearisation, the information model reads
\begin{align}\label{eq:linearised-pde-information-latent}
  r_\text{nonlinear}(t_k) \mid \vec{U}(t_k), \vec{\xi}(t_k) \sim \delta \left[ \dot U(t_k) - H_U U(t_k) - H_\xi \xi(t_k) - b \right],
\end{align}
or, for the white-noise version,
\begin{align}\label{eq:linearised-pde-information-white}
  r_\text{nonlinear}(t_k) \mid \vec{U}(t_k) \sim \Ncal \left( \dot U(t_k) - H_U U(t_k) - b,  H_\xi E H_\xi^\top  \right).
\end{align}
\cref{eq:linearised-pde-information-white,eq:linearised-pde-information-latent} are both linear in the states, and inference is possible with Kalman filtering and smoothing.

\end{document}